\theoremstyle{definition}
\theoremstyle{definition}
\newtheorem{prob}{Problem}
\newtheorem{theorem}{Theorem}
\newtheorem{definition}[theorem]{Definition}
\title{Motif Patterns and Coverings of Points with  Unit Disks, Part I}
\author{Jeremy F. Alm\footnote{Corresponding author}, Nicholas Hommowun, Elizabeth \\ Manary, and Aaron Schneider}
\affil{Department of Mathematics\\ Illinois College\\ 1101 W. College Ave.\\Jacksonville, IL 62650}
\begin{document}

\maketitle
\begin{abstract}
We consider a modification of Winkler's  ``dots and coins" problem, where we constrain the dots to lie on a square lattice in the plane.  We construct packings of ``coins" (closed unit disks) using motif patterns.
\end{abstract}
\section{Winkler's problem}
In his ``Puzzled" column \cite{Winkler10}, Peter Winkler discusses the following problem:

\begin{quote}
  { What is the largest integer $k$ such that any $k$ points in the plane, no matter how they are arranged, can always be covered with disks with pairwise-disjoint interior having radius 1?}
\end{quote}

Winkler states \cite{Winkler10b} that there exists a constructive proof  that gives a covering for any set of 12 points. In \cite{Kiyomi2012}, the authors give a configuration of 53 points on a triangular lattice that cannot be covered with unit disks.  Hence $12\leq k< 53$. This is a challenging problem, one that is likely to generate some interesting mathematics.

We might consider modifying the problem by constraining the locations of the points in some way; for instance, they could be restricted to lying on a square or triangular lattice.  In this essay, we take up the question of covering the points of the square lattice:

\begin{quote}
  { For which $d>0$ is it possible to cover all the points of the square lattice with inter-point distance $d$ (i.e., $(d\mathbb{Z}) \times(d\mathbb{Z})$) with disks with pairwise-disjoint interior having radius 1?}
\end{quote}

Call this lattice $L_d$.  The principal result of this essay is the following:

\begin{theorem}
For all $d\in \left[\frac{2}{\sqrt{13}},{\frac{1}{\sqrt{2}}}\right]\bigcup \left[\frac{4}{\sqrt{26}},\infty\right)$, $L_d$ can be covered with unit disks with pairwise-disjoint interior.
\end{theorem}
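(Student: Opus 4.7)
The plan is a case analysis in $d$: for each $d$ in the stated range, I would exhibit a \emph{motif}---a finite configuration of open unit disks together with a sublattice $\Lambda$ of translations---such that the family of disks obtained by translating the motif by every element of $\Lambda$ is a unit-disk packing (pairwise-disjoint open interiors) and every point of $L_d$ lies in at least one disk. For any candidate motif, verification reduces to two elementary inequalities in $d$: a \emph{covering} inequality, that every lattice point in a fundamental domain for $\Lambda$ lies within distance $1$ of some disk center in the motif, and a \emph{packing} inequality, that any two distinct disk centers in the resulting global arrangement are at Euclidean distance at least $2$. The endpoints $2/\sqrt{13}$, $1/\sqrt{2}$, and $4/\sqrt{26}$ appear as those values of $d$ at which, for particular motifs, the two inequalities meet with equality.

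For the tail $[4/\sqrt{26},\infty)$ I would work downward from large $d$ with progressively denser motifs. For $d\geq 2$, take one disk per lattice point. For $d\in[\sqrt{2},2]$, use one disk per pair of horizontally adjacent lattice points, with alternate rows offset by $d$ so that the critical inter-row disk distance is $d\sqrt{2}\geq 2$, while the intra-row spacing is $2d\geq 2\sqrt{2}$. For $d\in[1,\sqrt{2}]$, use one disk per $2\times 2$ block, centered at the block centroid on the sublattice $2d\,\mathbb{Z}^2$; the half-diagonal $d/\sqrt{2}\leq 1$ gives coverage and the center-spacing $2d\geq 2$ gives packing. For $d\in[4/\sqrt{26},1]$, a more intricate motif is needed, designed so that its critical inter-disk displacement has squared length $13d^2/2$; the packing inequality $d\sqrt{26}/2\geq 2$ then binds exactly at $d=4/\sqrt{26}$.

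For the interval $[2/\sqrt{13},1/\sqrt{2}]$ I would proceed analogously. For $d\in[2/3,1/\sqrt{2}]$, use the $3\times 3$-block motif: one disk at each point of $3d\,\mathbb{Z}^2$, covering the surrounding $3\times 3$ block (half-diagonal $d\sqrt{2}\leq 1$) and packing with minimum center-spacing $3d\geq 2$. To reach the endpoint $d=2/\sqrt{13}$, use a staggered $4\times 3$-block motif: each $4\times 3$ block of $12$ lattice points (half-diagonal $d\sqrt{13}/2$) is covered by a single unit disk at the block's center, and the tiling sublattice is chosen so that vertically adjacent block-rows are offset by $2d$, yielding a critical inter-row disk displacement of $(2d,3d)$, of length $d\sqrt{13}$. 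At $d=2/\sqrt{13}$ both the covering inequality $d\sqrt{13}/2\leq 1$ and the packing inequality $d\sqrt{13}\geq 2$ are tight simultaneously. An intermediate motif (or family of motifs) is then required to bridge $(2/\sqrt{13},2/3)$.

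The main obstacle is combinatorial: one must discover motifs that succeed throughout the entire subintervals, in particular on $[4/\sqrt{26},1]$ and $(2/\sqrt{13},2/3)$, where the covering constraint (favoring small $d$ and many lattice points per disk) and the packing constraint (favoring large $d$ and widely spaced disks) pull in opposite directions, so that no simple rectangular-block motif works across an entire subrange. Once the correct motifs are in hand, the verification is only routine Euclidean geometry; the difficulty lies in the discovery.
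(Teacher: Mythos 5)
Your overall strategy is exactly the paper's: build a periodic packing from a finite motif, and reduce correctness to a covering inequality (every lattice point within distance $1$ of some center) and a packing inequality (distinct centers at least $2$ apart). The paper packages precisely this as Theorem \ref{th} on admissible motif patterns, with $\beta$ the covering radius and $\alpha$ the minimum center separation, and then exhibits one motif per subinterval. Your block motifs for $[2/3,1/\sqrt{2}]$, $[1,\sqrt{2}]$, $[\sqrt{2},2]$ and $[2,\infty)$ are correct and correspond to the paper's motifs with $(\alpha,\beta)=(3,\sqrt{2})$, $(2,\sqrt{2}/2)$, $(\sqrt{2},1/2)$. However, the argument has a genuine gap on the two subintervals where the theorem actually has content: on $\left(\frac{2}{\sqrt{13}},\frac{2}{3}\right)$ and on $\left[\frac{4}{\sqrt{26}},1\right]$ you exhibit no construction at all. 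For the first, your staggered $4\times 3$ block has its covering and packing inequalities simultaneously tight, so it works only at the single value $d=2/\sqrt{13}$ and nowhere above it, and you explicitly defer the bridge to $2/3$ to an unspecified ``intermediate motif.'' For the second, you merely stipulate that some motif should have critical center displacement of squared length $13d^2/2$; that is reverse-engineering the endpoint, not a construction. The paper closes these gaps with four further motifs: one with $\alpha=\sqrt{13}$, $\beta=\sqrt{5/2}$ covering the whole interval $\left[\frac{2}{\sqrt{13}},\sqrt{2/5}\right]$ (strictly better than the rectangular $4\times 3$ block, whose $\beta=\sqrt{13}/2$ collapses the interval to a point), one for $\left[\sqrt{2/5},\frac{2}{3}\right]$, and two with $\alpha=\sqrt{26}/2$ and $\alpha=\sqrt{5}$ handling $\left[\frac{4}{\sqrt{26}},\frac{2}{\sqrt{5}}\right]$ and $\left[\frac{2}{\sqrt{5}},1\right]$ (the latter being the five-point ``plus'' motif). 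Since you yourself identify the discovery of these motifs as ``the difficulty,'' what you have is a correct framework plus the easy cases, not a proof of the stated theorem.

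A smaller point: you describe the motif as a configuration of \emph{open} unit disks, but at the endpoints of your own intervals (e.g.\ $d=1/\sqrt{2}$ for the $3\times 3$ block, or $d=\sqrt{2}$ for the $2\times 2$ block) the extreme lattice points lie at distance exactly $1$ from the nearest center and are covered only by the \emph{closed} disk. The paper is explicit that closed disks are intended; your closed intervals require that convention as well.
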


Note: unlike in \cite{Kiyomi2012}, we consider  \emph{closed} disks of unit radius.

\begin{figure}[htb!]
\centering
\includegraphics[width=200pt]{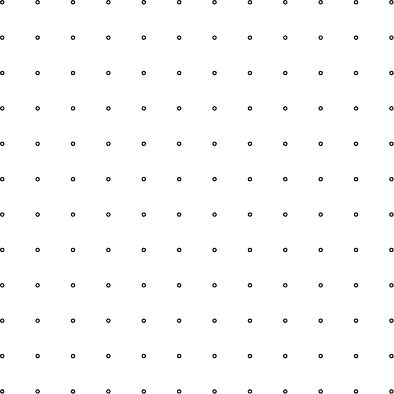}
\caption{$L_d$}
\label{fig:F1}
\end{figure}

%
%
%
%
%


\section{Definitions, Theorems, and a Method}

It is  easy to see that if $d\geq 2$, we can cover all the points of $L_d$  by giving each point its own disk.  What about $d< 2$?

Consider  $d= \sqrt{2}$:  a circle with  unit radius circumscribes a square with  side length $\sqrt{2}$, so we can cover all the points, four at a time, with unit disks:

\begin{figure}[H]
\centering
\includegraphics[width=200pt]{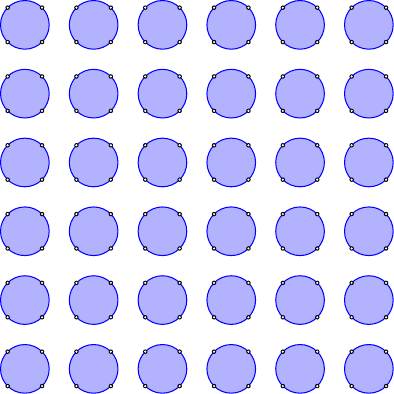}
\caption{A covering of $L_{\sqrt{2}}$}
\label{fig:F1}
\end{figure}

%
%
%
%
%

It is not hard to see that this same strategy will work for smaller values of $d$; the only thing we  need to worry about is making sure the disks do not overlap. In fact, this configuration works for all $1\leq d\leq \sqrt{2}$. This line of reasoning will be made more rigorous in  Theorem \ref{th}.

Let's try to extract the essential features of the previous example:

\begin{itemize}
 \item    a finite number of points were selected;
 \item  all of those points were within one unit of the  ``center";
 \item   distinct ``centers"  were at least two units apart.

\end{itemize}

We   codify this method in the form of a theorem.  First we need a few definitions. The first is from \cite{Grunbaum}.

\begin{definition}
  A \textbf{motif} is a non-empty plane set. A \textbf{motif pattern} $\mathcal{P}$ \textbf{with  motif} $M$ is a non-empty family $\{M_i:i\in I\} $ such that \begin{enumerate}
\item  $\forall i , \ M_i\text{ is congruent to }M$;
\item $\forall i\neq j , \ M_i\cap M_j =\varnothing$;
\item $\forall i, j ,$ there exists an isometry of the plane mapping $\mathcal{P}$ onto itself and $M_i$ onto $M_j$.
\end{enumerate}

\end{definition}

\begin{figure}[htb!]
\centering
\includegraphics[width=200pt]{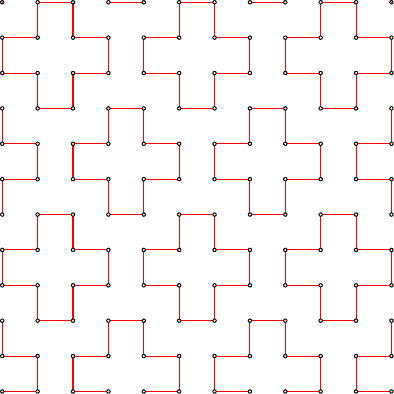}
\caption{A Motif Pattern}
\label{fig:F1}
\end{figure}

%
%
%
%
%

\begin{definition}
 Let $\mathcal{P} $ be a motif pattern with motif $M$.  Let  $p$ be a point on $M$, called the \textbf{center}, and let $p_i$ be the corresponding (center) point on $M_i$.  Define $$\alpha=\min\{d (p_i,p_j ):i\neq j\} $$ and $$\beta=\max\{d (p,x ):x\in M\} .$$
Then the motif pattern $\mathcal{P}$ is  \textbf{admissible} if $2\beta\leq\alpha$.
\end{definition}

We consider only closed and bounded motif patterns.  See Figure \ref{fig:F4} for an illustration of an admissible motif pattern with $\beta$ indicated, as well as two candidates for $\alpha$. The following theorem is our ``workhorse":

\begin{figure}[htb!]
\centering
\includegraphics[width=200pt]{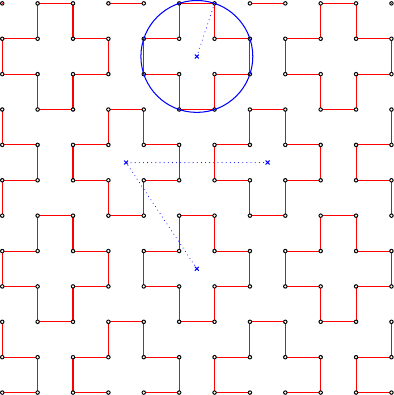}
\caption{An Admissible Motif Pattern with Centers Shown}
\label{fig:F4}
\end{figure}

%
%
%
%
%

\begin{theorem}\label{th}
Let $\mathcal{P}$ be an admissible motif pattern.  Suppose that   $L_1\subseteq\mathcal{P}$, i.e. $\mathcal{P}$ covers all the points of $L_1$. Then $L_d$ can be covered by unit disks with pairwise-disjoint interior for all $d\in [2/\alpha, 1/\beta]$.
\end{theorem}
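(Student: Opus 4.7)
My plan is to dilate the entire motif pattern to match the lattice spacing and then cover each scaled motif copy by a single closed unit disk. To begin, I would introduce the dilated pattern
\[
d\mathcal{P} := \{dM_i : i \in I\},
\]
where $dM_i = \{dx : x \in M_i\}$. Since the map $x \mapsto dx$ sends $L_1$ onto $L_d$, the hypothesis $L_1 \subseteq \bigcup_i M_i$ immediately yields $L_d \subseteq \bigcup_i (dM_i)$. Dilation scales all distances by $d$, so in $d\mathcal{P}$ the analogues of $\beta$ and $\alpha$ are $d\beta$ (the maximum distance from a scaled center $dp_i$ to any point of $dM_i$) and $d\alpha$ (the minimum distance between distinct scaled centers).

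Next I would define $D_i$ to be the closed unit disk centered at $dp_i$ and verify the two required properties. The containment $dM_i \subseteq D_i$ follows from $d\beta \le 1$, which is exactly the hypothesis $d \le 1/\beta$; combined with the covering of $L_d$ by the $dM_i$, this gives $L_d \subseteq \bigcup_i D_i$. For the interior-disjointness condition, two closed unit disks have disjoint interiors precisely when their centers are separated by at least $2$, and for $i \neq j$ one computes
\[
\|dp_i - dp_j\| = d\cdot\|p_i - p_j\| \ge d\alpha \ge 2,
\]
where the last inequality uses $d \ge 2/\alpha$. Hence $\{D_i\}_{i \in I}$ is the desired family.

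I do not expect a substantive obstacle here: the proof is essentially dimensional bookkeeping once the correct dilation is identified. The only conceptual point worth underlining in the write-up is that admissibility ($2\beta \le \alpha$) is exactly the condition making $[2/\alpha, 1/\beta]$ a nonempty interval, so without it the theorem would be vacuous. A minor item worth flagging is that \emph{closed} disks are genuinely needed at the left endpoint $d = 2/\alpha$, where neighboring disks may be tangent; this matches the note the authors placed after Theorem 1 regarding closed versus open disks.
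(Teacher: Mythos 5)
Your proposal is correct and is essentially the paper's own argument: the authors place disks of radius $1/d$ at the centers $p_i$ and dilate by $d$ at the end, while you dilate first and place unit disks, which is the same computation in a different order. Your explicit remarks on why admissibility makes the interval nonempty and why closed disks matter at $d = 2/\alpha$ are welcome additions not spelled out in the paper's proof.
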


\begin{proof}
Let $\mathcal{P}$ be an admissible motif pattern with motif $M $ that covers every point on $L_1$.  Let $d\in [2/\alpha, 1/\beta]$.  Place a disk $D_i $ of radius $\frac{1}{d}$ at the center $p_i$ of each copy $M_i $ of $M $. We have, by assumption, that $$L_1\subset\bigcup_i M_i   .  $$ Since $\frac{1}{d}\geq\beta$, $$L_1\subset\bigcup_i M_i \subseteq \bigcup_i D_i .  $$  Since $\frac{2}{d}\geq\alpha$, distinct disks $D_i $ and $D_j $ have disjoint interiors.  Now dilate $L_1$ and each $D_i $ by a factor of $d $. Then we have a covering of $L_d$ by units disks, as desired.
\end{proof}

\section{Motif Patterns and  Coverings of $L_d$}

    In light of the previous theorem, what remains to be done is to find admissible motif patterns. In Figures 5-11 below, several motif patterns are given. The centers of the motifs are not indicated, since they are right where you think they should be: at the centers of mass of the motifs.
\begin{figure}[H]
\centering
\includegraphics[width=200pt]{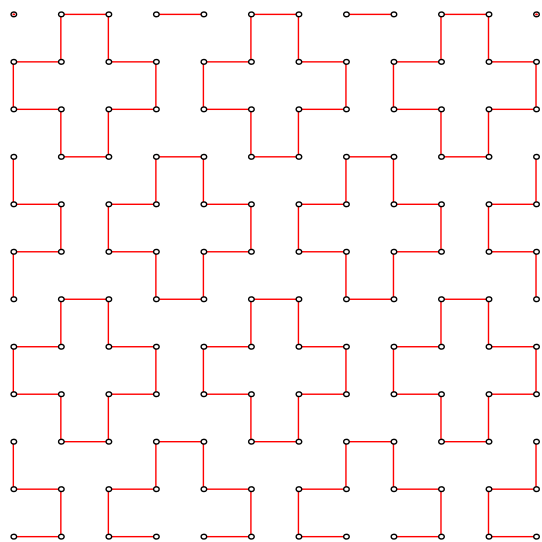}
\caption{A  Motif Pattern  for the Interval $\left[\frac{2}{\sqrt{13}},\sqrt{\frac{2}{5}}\right] $ }
\label{fig:F1}
\end{figure}
%
%
%
%
%

%
%
%
%
%
\begin{figure}[H]
\centering
\includegraphics[width=200pt]{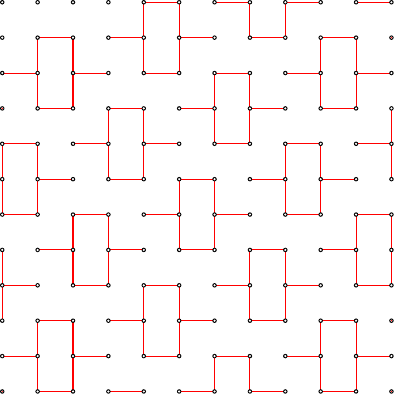}
\caption{A  Motif Pattern  for the Interval $\left[\sqrt{\frac{2}{5}},\frac{2}{3}\right] $}
\label{fig:F1}
\end{figure}

%
%
%
%
%
\begin{figure}[H]
\centering
\includegraphics[width=200pt]{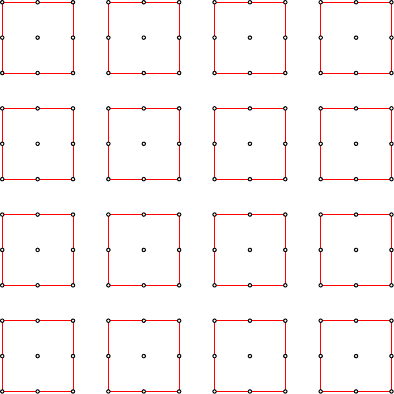}
\caption{A  Motif Pattern  for the Interval $\left[\frac{2}{3},\frac{1}{\sqrt{2}}\right] $}
\label{fig:F1}
\end{figure}

%
%
%
%
%
\begin{figure}[H]
\centering
\includegraphics[width=200pt]{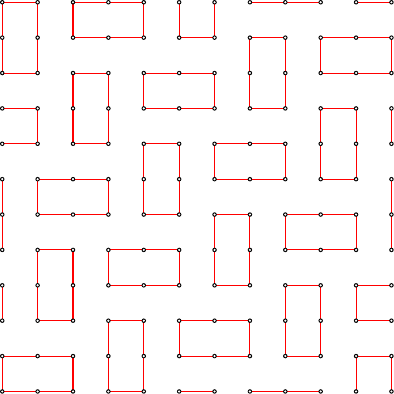}
\caption{A  Motif Pattern  for the Interval $\left[\frac{4}{\sqrt{26}},\frac{2}{\sqrt{5}}\right] $}
\label{fig:F1}
\end{figure}

%
%
%
%
%
\begin{figure}[H]
\centering
\includegraphics[width=200pt]{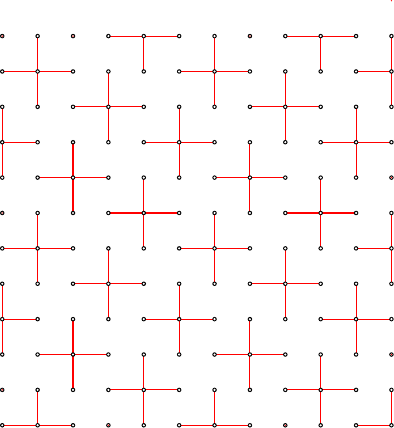}
\caption{A  Motif Pattern  for the Interval $\left[\frac{2}{\sqrt{5}},1\right] $}
\label{fig:F1}
\end{figure}

%
%
%
%
%
\begin{figure}[H]
\centering
\includegraphics[width=200pt]{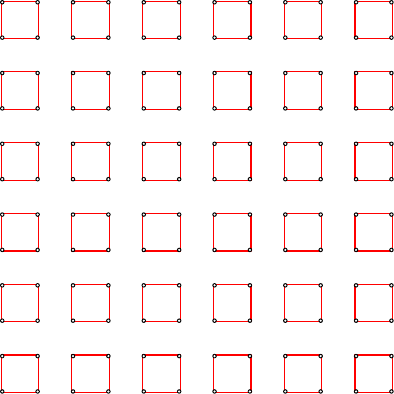}
\caption{A  Motif Pattern  for the Interval $[1,\sqrt{2}] $}
\label{fig:F1}
\end{figure}

%
%
%
%
%
\begin{figure}[H]
\centering
\includegraphics[width=200pt]{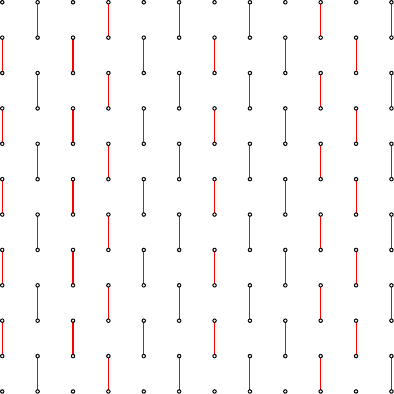}
\caption{A  Motif Pattern  for the Interval $[\sqrt{2},2] $}
\label{fig:F1}
\end{figure}

\section{Further Questions}

There is an annoying gap between $\frac{1}{\sqrt{2}}$ and $\frac{4}{\sqrt{26}}$, of width approximately 0.077, that contains 3/4. All of the authors believe that the set of all $d$ such that $L_d$ can be covered with unit disks is an interval, but we have no proof; nor do we have a way of closing this gap (yet).

\begin{prob}
  Is $L_\frac{3}{4}$ coverable?
\end{prob}

There is also the question of a lower bound.  In \cite{Kiyomi2012}, the authors argue that for $d<2(\frac{2\sqrt{3}}{3}-1)$, $L_d$ cannot be covered with disjoint unit disks.  The present authors believe the true lower bound to be closer to 1/2.

\begin{prob}
  Find a lower bound on $$\{d>0: L_d\text{ can be covered with unit disks with disjoint interior}\}.$$
\end{prob}

  Naturally, we could also consider the triangular lattice.

\begin{prob}
  Let $ T_d$ denote the triangular lattice with inter-point distance $d$. Determine for which $d>  0$  $ T_d$ can be covered with unit disks with disjoint interior.
\end{prob}

Now let us return to a question more in the spirit of the original problem.
\begin{prob}
  What is the largest integer $\ell$ so that for all $d>  0$, any set of  $\ell$ points on $L_d$ can be covered with unit disks with disjoint interior?
\end{prob}

By Winkler's argument, $\ell\geq 12$. In \cite{Kiyomi2012}, the authors note that their method, using the square lattice instead of the triangular lattice, gives a set of 102 points that cannot be covered.  So $12\leq\ell< 102$.

We will take up some of these questions in a subsequent essay.

\section{Acknowledgments}

We gratefully acknowledge the support of the Illinois College Student-Faculty Research Fund.  We also thank Jacob Manske for rendering the figures for this paper.


\end{document}